\DeclareMathOperator{\N}{N}
\newtheorem{theorem}{Theorem}[section]
\newtheorem{corollary}[theorem]{Corollary}
\newtheorem{definition}[theorem]{Definition}
\newtheorem{proposition}[theorem]{Proposition}
\newtheorem{remark}[theorem]{Remark}
\newcommand{\Fq}{\mathbb F_q}
\newcommand{\Fqn}{\mathbb F_{q^n}}
\newcommand{\F}{{\mathbb F}}
\newcommand{\Tr}{\hbox{{\rm Tr}}}
\newcommand{\la}{\langle}
\newcommand{\AG}{\mathrm{AG}}
\newcommand{\PG}{\mathrm{PG}}
\newcommand{\ran}{\rangle_{\Fqn}}
\newcommand{\raq}{\rangle_{\Fq}}
\title{A condition for scattered linearized polynomials involving Dickson matrices}
\author{Corrado Zanella}
\date{}
\begin{document}
\maketitle

\begin{abstract}
A linearized polynomial over $\F_{q^n}$ is called scattered when
for any $t,x\in\F_{q^n}$, the condition $xf(t)-tf(x)=0$ holds if and only if
$x$ and $t$ are $\Fq$-linearly dependent.
General conditions for  linearized polynomials over $\F_{q^n}$ to be scattered
can be deduced from the recent results in \cite{Cs2018,CsMPZ2019,MGS2019,PZ2019}.
Some of them are based on the Dickson matrix associated with a linearized polynomial.
Here a new condition involving Dickson matrices is stated.
This condition is then applied to the Lunardon-Polverino binomial 
$x^{q^s}+\delta x^{q^{n-s}}$, allowing to prove that for any $n$ and $s$,
if $\N_{q^n/q}(\delta)=1$, then
the binomial is not scattered.
Also, a necessary and sufficient condition for $x^{q^s}+bx^{q^{2s}}$ to be scattered is
shown which is stated in terms of a special plane algebraic curve.
\end{abstract}

\bigskip
{\it AMS subject classification:} 51E20, 05B25, 51E22

\bigskip
{\it Keywords:} Linear set, linearized polynomial, $q$-polynomial, finite projective line, 
scattered linear set, Dickson matrix

\section{Introduction}
A point $P$ of the projective space $\PG(d-1,q^n)$ is a one-dimensional subspace of the
vector space $\Fqn^d$; that is,
$P=\langle v\ran=\{cv\colon c\in\Fqn\}$ for some nonzero $v\in\Fqn^d$.

Let $U$ be an $r$-dimensional $\Fq$-subspace of $\Fqn^d$.
Then
\[
L_U:=\{\langle v\ran\colon v\in U,v\neq0\} \]
is an \emph{$\Fq$-linear set} (or just \emph{linear set}) of \emph{rank} $r$ in $\PG(d-1,q^n)$.
Let $u,v\in U$. If $u=cv$, $c\in\Fq$, then clearly $\langle u\ran=\langle v\ran$.
If  this is the only case in which two vectors of $U$ determine the same point of $\PG(d-1,q)$,
that is,  $\langle v\ran=\langle u\ran$ if and only if
$\langle v\raq=\langle u\raq$, then $L_U$ is called a \emph{scattered} linear set.
Equivalently, $L_U$ is scattered if and only if it has maximum size $(q^r-1)/(q-1)$ 
with respect to $r$.
The linear sets are related to combinatorial objects, such as
blocking sets, two-intersection sets, finite semifields, rank-distance codes, and many
others.
The interested reader is referred to the survey by O. Polverino \cite{Po2010} and to
\cite{Sh}, where J. Sheekey builds a bridge with the rank-distance codes. 

Assume that in particular $U$ is an $\Fq$-subspace of $\Fqn^2$, $\dim_{\Fq}U=n$.
In this case 
$L_U:=\{\langle v\ran\colon v\in U,v\neq0\}\subseteq\PG(1,q^n)$,
is called a \emph{maximum} linear set of $\PG(1,q^n)$, since by the dimension formula
any linear set of rank greater than $n$ equals $\PG(1,q^n)$.
Up to projectivities of $\PG(1,q^n)$ it may be assumed that $\la(0,1)\ran\not\in L_U$.
Hence 
\[ L_U={L_f}=\{\langle(x,f(x))\ran\colon x\in\Fqn^*\} \]
where $f(x)$ is a suitable $\Fq$-linear map, that is a linearized polynomial: 
\begin{equation}\label{linpol}
f(x)=\sum_{i=0}^{n-1}a_ix^{q^i},\quad a_i\in\Fqn,\quad i=0,1,\ldots,n-1.
\end{equation}
If $L_f$ is scattered, then $f(x)$ is called a
\emph{scattered} linearized polynomial, or \emph{scattered $q$-polynomial} with respect to $n$. 
A property characterizing the scattered $q$-polynomials is that
for any $x,y\in\Fqn^*$, ${f(x)}/x={f(y)}/y$ if and only if
$\langle x\raq=\langle y\raq$.

A first example of scattered $q$-polynomial is $f(x)=x^q$ \cite{BL2000}, with respect to any $n$. 
Indeed, for any   $x,y\in\Fqn^*$, ${f(x)}/x={f(y)}/y$, is equivalent to
$x^{q-1}=y^{q-1}$, hence to $x/y\in\Fq^*$.
A derived example is $f(x)=x^{q^s}$, $\gcd(n,s)=1$. Indeed $(x/y)^{q^s-1}=1$ implies
$x/y\in\mathbb F_{q^s}\cap\Fqn^*=\Fq^*$.
In both cases above, $L_f=\{\langle(x,f(x))\ran\colon x\in\Fqn^*\}=
\{\langle(1,z)\ran\colon z\in\Fqn,\,\N_{q^n/q}(z)=1\}$,
where $\N_{q^n/q}(z)=z^{(q^n-1)/(q-1)}$ denotes the norm over $\Fq$ of $z\in\Fqn$.
The related linear set is called a
\emph{linear set of pseudoregulus type}.

The next example has been given by G. Lunardon and O. Polverino \cite{LP2001} and generalized
in \cite{LMPT,Sh}:
\[f(x)=x^{q^s}+\delta x^{q^{n-s}},\quad n\ge4,\quad \gcd(n,s)=1,\quad \N_{q^n/q}(\delta)\neq1.\]
In particular cases, the condition $\N_{q^n/q}(\delta)\neq1$ has been proved to be necessary for $f(x)$ to be
scattered  \cite{BartoliZhou,CsZ2018,LMPT,ZZ}.
In section \ref{twobin} it will proved that actually it is necessary for any $n$ and $s$.
Further examples of scattered $q$-polynomials are given in \cite{CsMZ2018,CMPZ,MMZ,ZZ}.
All of them are with respect to $n=6$ or $n=8$.
D. Bartoli, M. Giulietti, G. Marino, and O. Polverino \cite{BGMP2018} proved that if
$\hat f(x)$ is the adjoint of $f(x)$ with respect to
the bilinear form $\langle x,y\rangle=\Tr_{q^n/q}(xy)$ in $\Fqn^2$,
where $\Tr_{q^n/q}(z)=\sum_{i=0}^{n-1}z^{q^i}$ denotes the trace over $\Fq$ of $z\in\Fqn$,
 then $L_f=L_{\hat f}$.
This implies that if the polynomial $f(x)$ in (\ref{linpol}) is scattered, then also
$\hat f(x)=\sum_{i=0}^{n-1}a_i^{q^{n-i}}x^{q^{n-i}}$ is.
Up to the knowledge of the author of this paper, no more examples of scattered $q$-polynomials
are known.
So, it would seem that scattered $q$-polynomials are rare.
D. Bartoli and Y. Zhou \cite{BartoliZhou} formalized such an idea of scarcity by proving
that the pseudoregulus and Lunardon-Polverino polynomials are, roughly speaking,
the only $q$-polynomials of a certain type which are scattered for infinitely many $n$. 

Recently, a great deal of effort has been put in finding conditions for
$q$-polynomials to be scattered \cite{Cs2018,CsMPZ2019,MGS2019,PZ2019}.
Some of them are based on the \emph{Dickson matrix} associated
with the $q$-polynomial in (\ref{linpol}), that is, the $n\times n$ matrix
\[
M_{q,f}=
\begin{pmatrix}
a_0&a_1&a_2&\cdots&a_{n-1}\\
a_{n-1}^q&a_0^q&a_1^q&\cdots&a_{n-2}^q\\
a_{n-2}^{q^2}&a_{n-1}^{q^2}&a_0^{q^2}&\cdots&a_{n-3}^{q^2}\\
\vdots&&&&\vdots\\
a_{1}^{q^{n-1}}&a_{2}^{q^{n-1}}&a_3^{q^{n-1}}&\cdots&a_{0}
^{q^{n-1}}\end{pmatrix}.
\]
It is well-known that the rank of $M_{q,f}$ equals the rank of $f(x)$, see for example
\cite[Proposition 4.4]{WL2003}.
This rank can be computed by applying the following result by B.\ Csajb\'ok:
\begin{theorem}[{\cite[Theorem 3.4]{Cs2018}}]\label{Th:Sottobicchiere}
Let $M_{q,f}$ be the Dickson matrix associated with the $q$-polynomial in (\ref{linpol}).
Denote by $M_{q,f}^{(r)}$ the $r\times r$ submatrix of $M_{q,f}$ obtained by considering the last 
$r$ columns and the first $r$ rows of $M_{q,f}$.
Then the rank of $f(x)$ is $t$ if and only if
$\det (M_{q,f}^{(n)}) =\det (M_{q,f}^{(n-1)}) =\cdots = \det (M_{q,f}^{(t+1)}) =0$, 
and $\det (M_{q,f}^{(t)}) \neq 0$.
\end{theorem}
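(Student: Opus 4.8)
The plan is to reduce the whole statement to a single claim about $M:=M_{q,f}$ and then exploit its shift-Frobenius symmetry. By \cite[Proposition 4.4]{WL2003} the rank of $f$ equals the rank of $M$, so I work over $\Fqn$ with $M$ alone. Writing $t$ for this common rank, I first note that the asserted equivalence reduces to the single nontrivial fact
\[
\det\big(M_{q,f}^{(t)}\big)\neq0 .
\]
Indeed, for each $r>t$ the block $M_{q,f}^{(r)}$ is an $r\times r$ submatrix of a rank-$t$ matrix and hence singular, which yields $\det(M_{q,f}^{(n)})=\dots=\det(M_{q,f}^{(t+1)})=0$ at no cost; and once the top-right square block of size equal to the rank is known to be nonsingular while all larger ones vanish, a direct comparison of indices shows that the integer $t$ satisfying the stated chain of equalities and the inequality is forced to equal the rank. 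Thus both directions follow from the displayed nonvanishing.

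To establish it I would invoke an elementary fact valid for any rank-$t$ matrix: for index sets $I,J$ of size $t$, the minor $\det M[I,J]$ is nonzero precisely when the rows indexed by $I$ are linearly independent and the columns indexed by $J$ are linearly independent. This follows from a rank factorization $M=AB$ with $A$ of size $n\times t$ and $B$ of size $t\times n$, each of rank $t$: then $M[I,J]=A[I,\,\cdot\,]\,B[\,\cdot\,,J]$, so $\det M[I,J]=\det A[I,\,\cdot\,]\cdot\det B[\,\cdot\,,J]$, and full column rank of $A$ (resp. row rank of $B$) translates independence of the chosen rows (resp. columns) of $M$ into nonvanishing of the corresponding factor. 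Taking $I=\{0,\dots,t-1\}$ and $J=\{n-t,\dots,n-1\}$, proving $\det(M_{q,f}^{(t)})\neq0$ becomes the conjunction of two assertions: the first $t$ rows of $M$ are independent, and the last $t$ columns of $M$ are independent.

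Both assertions I would derive from the defining structure of a Dickson matrix. Denoting the rows by $r_0,\dots,r_{n-1}$, one checks entrywise that $r_{i+1}$ arises from $r_i$ by a cyclic shift together with the entrywise $q$-power, so $r_i=\varphi^i(r_0)$ for the invertible $\sigma$-semilinear operator $\varphi$ given by ``shift followed by Frobenius'' (here $\sigma$ is the $q$-power). Consequently the row space of $M$ is the $\varphi$-cyclic space generated by $r_0$. For an invertible semilinear operator the usual cyclic-vector argument goes through: letting $d$ be the least index with $\varphi^{d}(r_0)$ lying in the span of $r_0,\dots,\varphi^{d-1}(r_0)$, that span is $\varphi$-invariant and therefore equals the whole cyclic space, whence $d$ is its dimension and $r_0,\dots,\varphi^{d-1}(r_0)$ form a basis. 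Since the cyclic space is the row space we get $d=t$, i.e. the first $t$ rows are independent. The columns obey the analogous recurrence (shift and Frobenius commuting), so the column space is the cyclic space generated by any single column; generating it from $c_{n-t}$ exhibits $c_{n-t},\dots,c_{n-1}$ as the first $t$ iterates and hence as an independent family, giving the independence of the last $t$ columns.

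The crux, and the only step using the special structure, is exactly the passage from ``rank $t$'' to ``the top-right $t\times t$ block is invertible'': for a general rank-$t$ matrix that particular block need not be nonsingular, and it is the cyclicity of both the row and the column space under the shift-Frobenius operator that pins the relevant bases down to the first $t$ rows and the last $t$ columns. Two points require care: the cyclic-vector lemma must be set up for semilinear rather than linear operators, using invertibility of $\varphi$ to keep linear independence under images; and for the columns one must generate the cyclic space starting from $c_{n-t}$, so that the $t$ independent iterates are the last $t$ columns rather than the first $t$.
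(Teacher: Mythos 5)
Your proof is correct, but there is nothing in the paper to compare it against: Theorem \ref{Th:Sottobicchiere} is quoted verbatim as an external result of Csajb\'ok \cite[Theorem 3.4]{Cs2018} and is used as a black box, so your argument stands as an independent, self-contained proof. Its three ingredients all check out. (i) The reduction of the full equivalence to the single claim that a rank-$t$ Dickson matrix has nonsingular top-right $t\times t$ block is valid: minors of order greater than the rank vanish trivially, and the converse direction follows formally from the forward one, since a rank $t'>t$ would force $\det(M_{q,f}^{(t')})\neq 0$ against the hypothesis, while a rank $t'<t$ would force $\det(M_{q,f}^{(t)})=0$. (ii) The rank-factorization identity $\det M[I,J]=\det A[I,\cdot]\,\det B[\cdot,J]$ correctly identifies nonvanishing of a $t\times t$ minor with simultaneous independence of the selected rows and columns. (iii) The cyclic-vector argument is where the Dickson structure genuinely enters, and your care with semilinearity is warranted and sufficient: if $\varphi^{d}(r_0)=\sum_{i<d}c_i\varphi^{i}(r_0)$, applying $\varphi$ gives $\varphi^{d+1}(r_0)=\sum_{i<d}c_i^{q}\varphi^{i+1}(r_0)$, so the span of the first $d$ iterates is $\varphi$-invariant and hence equals the whole row space, and minimality of $d$ gives independence, whence $d=t$. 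The column version works the same way, since column $j+1$ is the shift-Frobenius image of column $j$ (with $a^{q^n}=a$ closing the cycle), and starting the iteration at the column of index $n-t$ makes the $t$ independent iterates exactly the last $t$ columns, as the statement requires. It is worth noting that your technique is in the same spirit as what the paper does elsewhere with this structure: Remark \ref{rem-dickson} and its use in the proof of Proposition \ref{zero} exploit precisely this shift-Frobenius recurrence between consecutive rows of a Dickson matrix, so your proof fits naturally into the paper's toolkit even though the paper itself delegates this particular statement to \cite{Cs2018}.
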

A $q$-polynomial $f(x)\in\Fqn[x]$ is scattered if and only if for any $m\in\Fqn$ the dimension of 
the kernel  of $f_m(x)=mx+f(x)$ is at most one.
So, by Theorem \ref{Th:Sottobicchiere} a necessary and sufficient condition for $f(x)$ 
 to be scattered is that the system of two equations
\[ \det (M_{q,f_m}^{(n)})=\det (M_{q,f_m}^{(n-1)})=0 \]
has no solution in the variable $m\in\Fqn$.

In this paper a condition consisting of one equation (Proposition \ref{zero})
is proved, and applied to two binomials.
It would seem that one equation is better than two in order to prove that a given $q$-polynomial 
$f (x)$ is not scattered, while two equations will usually be 
more helpful in the proof that $f (x)$ is.
As a matter of fact, here the condition $\N_{q^n/q}(\delta)\neq1$ is proved to be necessary for the 
Lunardon-Polverino binomial to be scattered 
(cf.\ Theorem \ref{cnecess}).
Furthermore,
two necessary and sufficient conditions for $x^{q^s}+bx^{q^{2s}}$ (where $\gcd(s,n)=1$) 
to be scattered are 
stated in Propositions \ref{pqq} and \ref{geomalg}.
This leads to the fact that the polynomial $x^q+bx^{q^2}$, $b\neq0$, is never scattered
if $n\ge5$ (cf.\ Proposition \ref{223} and Remark \ref{maria}).

\section{A condition for scattered linearized polynomials}

In this paper $s$, $n$, $q$ and $\sigma$ will always denote natural numbers
such that $n\ge3$, $\gcd(s,n)=1$, $q$ is the power of a prime and $\sigma=q^s$.
Any $\Fq$-linear endomorphism of $\Fqn$ can be represented in the form
\begin{equation}\label{sigmapol}
f(x)=a_0x+a_1x^\sigma+a_2x^{\sigma^2}+\cdots+a_{n-1}x^{\sigma^{n-1}}\in\Fqn[x].
\end{equation}
As a matter of fact, if $\tau$ is the permutation $i\mapsto is$ of $\mathbb Z/(n)$,
then $f(x)$ is the same function of $\tilde f(x)=\sum_{i=0}^{n-1}a_{\tau^{-1}(i)}x^{q^i}$.
Generalizing the notion  of Dickson matrix given in the previous section,
the \emph{$\sigma$-matrix of Dickson} associated with the linearized polynomial
$g(t)=\sum_{i=0}^{n-1}a_it^{\sigma^i}$
is
\[
M_{\sigma,g}=\begin{pmatrix}
a_0&a_1&a_2&\cdots&a_{n-1}\\
a_{n-1}^\sigma&a_0^\sigma&a_1^\sigma&\cdots&a_{n-2}^\sigma\\
a_{n-2}^{\sigma^2}&a_{n-1}^{\sigma^2}&a_0^{\sigma^2}&\cdots&a_{n-3}^{\sigma^2}\\
\vdots&&&&\vdots\\
a_{1}^{\sigma^{n-1}}&a_{2}^{\sigma^{n-1}}&a_3^{\sigma^{n-1}}&\cdots&a_{0}
^{\sigma^{n-1}}\end{pmatrix}.
\]
This is just the Dickson matrix $M_{q,\tilde g}$ associated with $\tilde g(t)$
after a permutation of the row and columns.
Indeed, the element in row $r$ and column $c$ of $M_{q,\tilde g}$,
$r,c\in\{0,1,\ldots,n-1\}$, is $m_{rc}=a_{\tau^{-1}(c-r)}^{q^r}=a_{\tau^{-1}(c)-\tau^{-1}(r)}^{q^r}$.
By applying $\tau$ to both the row and column index,
$m_{\tau(r)\tau(c)}=a_{c-r}^{\sigma^r}$ follows.
Therefore, the rank of $M_{\sigma,g}$ equals the rank of $g(t)$.
\begin{remark}\label{rem-dickson}
Each row of an $n$-order  $\sigma$-matrix of Dickson is obtained from the previous one
(cyclically)
by the map \[\phi:(X_0,X_1,\ldots,X_{n-1})\mapsto(X_{n-1},X_0,\ldots,X_{n-2})^\sigma\]
which is an invertible semilinear map of $\F_{q^n}^n$ into itself.
\end{remark}

The polynomial (\ref{sigmapol}) is scattered if and only if $f_1(x)=\sum_{i=1}^{n-1}a_ix^{\sigma^i}$
is.
Hence in the following $a_0$ will always be zero.
\begin{proposition}\label{zero}
Let $f(x)=\sum_{i=1}^{n-1}a_ix^{\sigma^i}$ be a linearized polynomial over $\F_{q^n}$,
and \[g(t)=g_x(t)=-f(x)t+\sum_{i=1}^{n-1}a_ix^{\sigma^i}t^{\sigma^i}=-f(x)t+f(xt).\]
Then the following conditions are equivalent:
\begin{enumerate}[(i)]
\item the polynomial $f(x)$ is scattered;
\item for any $x\in\F_{q^n}^*$, a nonsingular $(n-1)$-order minor of $M_{\sigma,g}$ exists;
\item for any $x\in\F_{q^n}^*$, all $(n-1)$-order minors of $M_{\sigma,g}$ are nonsingular.
\end{enumerate}
\end{proposition}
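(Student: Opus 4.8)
The plan is to translate everything into the kernel of the $\sigma$-matrix of Dickson and then exploit its special shape. For $t\in\Fqn$ write $w(t)=(t,t^\sigma,\ldots,t^{\sigma^{n-1}})^{\mathsf T}$; since $c^\sigma=c$ for $c\in\Fq$, the map $w$ is an injective $\Fq$-linear map $\Fqn\to\Fqn^n$, and a direct computation from the entry formula $(M_{\sigma,g})_{rc}=b_{c-r}^{\sigma^r}$ (indices mod $n$, where $g_x(t)=\sum_{i=0}^{n-1}b_it^{\sigma^i}$ with $b_0=-f(x)$ and $b_i=a_ix^{\sigma^i}$ for $i\ge1$) gives the bridge identity $M_{\sigma,g}\,w(t)=w(g_x(t))$ for every $t$, using $\sigma^n=\mathrm{id}$ on $\Fqn$. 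In particular $w(t)\in\ker M_{\sigma,g}$ whenever $t\in\ker g_x$. First I would record that $\Fq\subseteq\ker g_x$ for every $x$, because $g_x(c)=-f(x)c+f(xc)=-f(x)c+cf(x)=0$ for $c\in\Fq$; hence $\mathbf 1=w(1)$ always lies in $\ker M_{\sigma,g}$ and the rank of $M_{\sigma,g}$ is at most $n-1$.

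Next I would identify scatteredness with maximal rank. Since $g_x(t)=0$ with $t\ne0$ is equivalent to $f(xt)/(xt)=f(x)/x$, and $\la xt\raq=\la x\raq$ iff $t\in\Fq$, the characterization of scattered polynomials recalled in the introduction shows that $f$ is scattered if and only if $\ker g_x=\Fq$ for every $x\in\Fqn^*$, i.e.\ if and only if $\dim_{\Fqn}\ker M_{\sigma,g}=1$, that is $M_{\sigma,g}$ has rank $n-1$ for every $x\in\Fqn^*$. Because the rank is always at most $n-1$, the existence of a nonsingular $(n-1)$-order minor is exactly the statement that the rank equals $n-1$; this already yields the equivalence (i)$\Leftrightarrow$(ii).

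It remains to prove (i)$\Rightarrow$(iii), which I expect to be the crux. Assume the rank is $n-1$. I would invoke the adjugate: for an $n\times n$ matrix of rank $n-1$ one has $\mathrm{adj}(M_{\sigma,g})=\lambda\,uv^{\mathsf T}$ with $\lambda\ne0$, where $u$ spans the right kernel and $v$ spans the left kernel, so that the minor obtained by deleting row $j$ and column $i$ equals $\pm\lambda\, u_iv_j$; thus all $(n-1)$-order minors are nonzero precisely when $u$ and $v$ have no zero entry. By the bridge identity the kernel is spanned by $u=w(t_1)$ for some $t_1\ne0$, which has all entries nonzero. For the left kernel I would observe that the transpose of a $\sigma$-matrix of Dickson is again one: a short index computation shows $M_{\sigma,g}^{\mathsf T}=M_{\sigma,\hat g}$, where $\hat g$ is the adjoint $\sigma$-polynomial with coefficients $\hat b_j=b_{-j}^{\sigma^j}$. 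Since $\hat g$ has the same rank $n-1$, its kernel is one-dimensional over $\Fq$, and the bridge identity applied to $\hat g$ gives that $\ker M_{\sigma,g}^{\mathsf T}$ is spanned by $v=w(t_0)$ for some $t_0\ne0$, again with no zero entry. Hence every $(n-1)$-order minor is nonzero, proving (iii); and (iii)$\Rightarrow$(ii) is trivial, closing the loop.

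The main obstacle is exactly the passage (ii)$\Rightarrow$(iii): a generic matrix of rank $n-1$ may well have vanishing $(n-1)$-order minors, so the conclusion rests entirely on the Moore-type shape $w(t)$ of both the kernel and the cokernel, together with the fact, reflecting Remark \ref{rem-dickson}, that the Dickson structure is preserved under transposition via the adjoint. Verifying the two index computations, namely $M_{\sigma,g}\,w(t)=w(g_x(t))$ and $M_{\sigma,g}^{\mathsf T}=M_{\sigma,\hat g}$, is the only genuinely computational part, and both reduce to bookkeeping with the exponents $\sigma^r$ modulo $\sigma^n=\mathrm{id}$.
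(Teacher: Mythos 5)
Your proof is correct, but it settles the crucial implication by a genuinely different mechanism than the paper. The paper first passes from $h(t)=xf(t)-tf(x)$ to $M_{\sigma,g}$ by a diagonal rescaling of rows and columns, and then proves that \emph{every} $(n-1)\times(n-1)$ submatrix has the same rank as $M_{\sigma,g}$ using two structural facts: by Remark \ref{rem-dickson} the rows of a singular $\sigma$-matrix of Dickson are cyclic images of one another under an invertible semilinear map, so each row lies in the span of the remaining ones and deleting any row preserves the rank; and the columns of $M_{\sigma,g}$ sum to zero (because $g_x(1)=0$), so deleting any column preserves the rank as well. All three conditions then collapse onto ``rank $=n-1$ for every $x$.'' You instead factor the adjugate, $\mathrm{adj}(M_{\sigma,g})=\lambda\,uv^{\mathsf T}$ with $u$, $v$ spanning the right and left kernels, identify $u=w(t_1)$ through the bridge identity $M_{\sigma,g}w(t)=w(g_x(t))$, and identify $v=w(t_0)$ through the (correct) identity $M_{\sigma,g}^{\mathsf T}=M_{\sigma,\hat g}$, so that every cofactor is a product of nonzero entries. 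Both arguments exploit the Dickson structure, but through different invariants: the paper through row-shift semilinearity and zero column sums, which is more elementary (no adjugate, no adjoint polynomial); yours through the Moore shape of the kernel and cokernel, which buys a little more, namely an explicit rank-one formula $\pm\lambda\, t_1^{\sigma^i}t_0^{\sigma^j}$ for the minor obtained by deleting row $j$ and column $i$, not merely its nonvanishing. One point you should make explicit: the step ``$\ker g_x=\Fq$ iff $\dim_{\Fqn}\ker M_{\sigma,g}=1$'' and the claim that $\ker\hat g\neq\{0\}$ both rest on the equality between the rank of a $\sigma$-matrix of Dickson and the rank of the associated map, which the bridge identity alone does not give (it yields only one inclusion); the paper establishes this equality just before Remark \ref{rem-dickson}, so you may simply cite it.
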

\begin{proof}
The polynomial $f(x)$ is scattered if and only if for any $x\in\F_{q^n}^*$
the rank of $h(t)=xf(t)-tf(x)$ is $n-1$, that is, the rank of
\[
M_{\sigma,h}=\begin{pmatrix}-f(x)&a_1x&a_2x&\ldots&a_{n-1}x\\
a_{n-1}^\sigma x^\sigma&-f(x)^\sigma&a_1^\sigma x^\sigma&\ldots&a_{n-2}^\sigma x^\sigma\\
a_{n-2}^{\sigma^2} x^{\sigma^2}&a_{n-1}^{\sigma^2}x^{\sigma^2}&-f(x)^{\sigma^2}&
\ldots&a_{n-3}^{\sigma^2} x^{\sigma^2}\\ \vdots&&&&\vdots\\
a_1^{\sigma^{n-1}}x^{\sigma^{n-1}}&a_2^{\sigma^{n-1}}x^{\sigma^{n-1}}&a_3^{\sigma^{n-1}}
x^{\sigma^{n-1}}&\ldots&-f(x)^{\sigma^{n-1}}\end{pmatrix}
\]
is always $n-1$.
By dividing the rows of $M_{\sigma,h}$ by $x$, $x^\sigma$, $x^{\sigma^2}$, $\ldots$,
$x^{\sigma^{n-1}}$, respectively, and then multiplying the columns for that same elements, 
one obtains
\[
\begin{pmatrix}
-f(x)&a_1x^\sigma&a_2x^{\sigma^{2}}&\ldots&a_{n-1}x^{\sigma^{n-1}}\\
a_{n-1}^\sigma x&-f(x)^\sigma&a_1^{\sigma} x^{\sigma^2}&\ldots&a_{n-2}^\sigma x^{\sigma^{n-1}}\\
a_{n-2}^{\sigma^2} x&a_{n-1}^{\sigma^2}x^{\sigma}&-f(x)^{\sigma^2}&
\ldots&a_{n-3}^{\sigma^2} x^{\sigma^{n-1}}\\ \vdots&&&&\vdots\\
a_1^{\sigma^{n-1}}x&a_2^{\sigma^{n-1}}x^{\sigma}&a_3^{\sigma^{n-1}}
x^{\sigma^{2}}&\ldots&-f(x)^{\sigma^{n-1}}\end{pmatrix},
\]
that is, the matrix $M_{\sigma,g}$.
By Remark \ref{rem-dickson}, if a 
$\sigma$-matrix of Dickson is singular, then any row is a linear combination
of the remaining ones.
Hence the rank of $M_{\sigma,g}$ equals the rank of any $(n-1)\times n$ matrix obtained
from it by deleting a row.
Furthermore, since the sum of the columns of $M_{\sigma,g}$ is zero, all
$(n-1)$-order minors have the same rank of $M_{\sigma,g}$.
\end{proof}

\section{Two linearized  binomials}\label{twobin}

\begin{definition}
For any $\delta\in\F_{q^n}$,
\[
f_{\sigma,\delta}(x)=x^\sigma+\delta x^{\sigma^{n-1}}
\]
is the \emph{Lunardon-Polverino binomial}.
\end{definition}
If $\N_{q^n/q}(\delta)\neq1$, then $f_{\sigma,\delta}$ is scattered
\cite{LMPT,LP2001,LTZ,Sh}.

\begin{proposition}\label{p-LP}
The polynomial $f_{\sigma,\delta}(x)$ is scattered if  only if
there is no $x\in\F_{q^n}^*$ such that
\begin{equation}\label{e-LP}
\sum_{i=0}^{n-1}z^{(\sigma^i-1)/(\sigma-1)}=0,
\end{equation}
where
$z=\delta x^{\sigma^{n-1}-\sigma}$.
\end{proposition}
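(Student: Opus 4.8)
The plan is to invoke Proposition \ref{zero} and then simply read off the rank of the cyclic tridiagonal matrix that arises. First I would write down $M_{\sigma,g}$ for $g=g_x$ attached to $f_{\sigma,\delta}$: since only the coefficients $a_1=1$ and $a_{n-1}=\delta$ are nonzero, one has $g_x(t)=-f(x)t+x^\sigma t^\sigma+\delta x^{\sigma^{n-1}}t^{\sigma^{n-1}}$, so $M_{\sigma,g}$ is tridiagonal in the cyclic sense: its $r$-th diagonal entry is $-f(x)^{\sigma^r}$, the entry immediately to the right is $x^{\sigma^{r+1}}$, and the entry immediately to the left is $\delta^{\sigma^r}x^{\sigma^{r-1}}$ (indices mod $n$, using $x^{\sigma^n}=x$). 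By the proof of Proposition \ref{zero} the column sums vanish, so the all-ones vector lies in the kernel and $\operatorname{rank}M_{\sigma,g}\le n-1$ always; hence, by Proposition \ref{zero}, $f_{\sigma,\delta}$ fails to be scattered precisely when there is an $x\in\Fqn^*$ for which the rank drops further, i.e. $\dim\ker M_{\sigma,g}\ge 2$.

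Next I would translate this into a monodromy statement. A vector $v=(v_0,\dots,v_{n-1})$ lies in $\ker M_{\sigma,g}$ iff its entries satisfy the three-term cyclic recurrence coming from the rows, which I package with transfer matrices $T_r$ acting on $\binom{v_r}{v_{r-1}}$ (well defined since $x\neq0$); the kernel is then isomorphic to the fixed space of the monodromy $T=T_{n-1}\cdots T_0$, so $\dim\ker M_{\sigma,g}\ge 2$ iff $T=I$. Because each row of $M_{\sigma,g}$ sums to zero, every $T_r$ fixes $(1,1)^{\!\top}$; conjugating by the matrix carrying $(1,1)^{\!\top}$ to a coordinate vector makes each factor upper triangular, and a short computation gives $\widetilde T_r=\left(\begin{smallmatrix}1&1\\0&z^{\sigma^r}\end{smallmatrix}\right)$ with $z=\delta x^{\sigma^{n-1}-\sigma}$, exactly the quantity in the statement.

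Multiplying the $\widetilde T_r$ is now elementary: their product is $\widetilde T=\left(\begin{smallmatrix}1&S\\0&N\end{smallmatrix}\right)$, where $N=\prod_{r}z^{\sigma^r}=\N_{q^n/q}(z)$ and, telescoping the $(1,2)$-entry, $S=\sum_{i=0}^{n-1}z^{(\sigma^i-1)/(\sigma-1)}$, precisely the left-hand side of (\ref{e-LP}). Thus $T=I$ iff both $S=0$ and $N=1$.

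The decisive (and least routine) step is to show that the single equation $S=0$ already forces $N=1$, so that (\ref{e-LP}) alone detects the rank drop. This I would obtain from the shift relation $z^{(\sigma^{k+1}-1)/(\sigma-1)}=z\,\bigl(z^{(\sigma^k-1)/(\sigma-1)}\bigr)^{\sigma}$ together with $z^{(\sigma^n-1)/(\sigma-1)}=\N_{q^n/q}(z)$, which combine into the identity $zS^{\sigma}-S=\N_{q^n/q}(z)-1$; evaluating it at $S=0$ yields $\N_{q^n/q}(z)=1$. Since $\N_{q^n/q}(z)=\N_{q^n/q}(\delta)$, this also recovers the known implication $\N_{q^n/q}(\delta)\neq1\Rightarrow$ scattered. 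Combining everything, $f_{\sigma,\delta}$ is non-scattered iff some $x\in\Fqn^*$ satisfies (\ref{e-LP}), which is the assertion. The obstacles I anticipate are mostly bookkeeping: keeping the cyclic indices and the reductions $x^{\sigma^n}=x$, $\delta^{\sigma^n}=\delta$ consistent throughout, and verifying the transfer-matrix/kernel dictionary rigorously — in particular that the fixed space of $T$ has dimension $2$ exactly when $T=I$, which holds because $T$ is $2\times2$ and $\ker(T-I)$ is already nonzero.
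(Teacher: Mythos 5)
Your proof is correct, and it reaches (\ref{e-LP}) by a genuinely different route than the paper. The paper's proof also starts from Proposition \ref{zero}, but then it only evaluates a single $(n-1)$-order minor: after normalizing $M_{\sigma,g}$ row by row it takes the North-West principal $(n-1)\times(n-1)$ submatrix, namely the tridiagonal matrix $B(z)$ of (\ref{e-Mz}), and computes $\det B(z)=(-1)^{n+1}\sum_{i=0}^{n-1}z^{(\sigma^i-1)/(\sigma-1)}$ by Laplace expansion along the last column and induction on $n$; Proposition \ref{zero}(ii)--(iii) then gives the statement at once. You never form an $(n-1)$-order determinant: you characterize the rank drop through the kernel, identify the kernel of the cyclic tridiagonal matrix with the fixed space of the monodromy $T$ of $2\times 2$ transfer matrices, and triangularize using the common fixed vector $(1,1)^{\top}$; your computations check out (conjugating by $P=\left(\begin{smallmatrix}1&1\\1&0\end{smallmatrix}\right)$ gives exactly $\widetilde T_r=\left(\begin{smallmatrix}1&1\\0&z^{\sigma^r}\end{smallmatrix}\right)$, and the telescoping yields $S$ and $N=\N_{q^n/q}(z)$). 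The cost of your route is that $T=I$ is the conjunction of $S=0$ and $N=1$, so you need the extra identity $zS^{\sigma}-S=\N_{q^n/q}(z)-1$ to see that $S=0$ already forces $N=1$; the paper needs no such step here, but proves precisely this identity afterwards, in Proposition \ref{rephr}. So your argument in effect merges Proposition \ref{p-LP} with the first half of Proposition \ref{rephr}, and yields for free that every solution $z$ of (\ref{e-LP}) has norm $1$ (whence the known sufficiency of $\N_{q^n/q}(\delta)\neq 1$), at the price of a longer setup. Two small cleanups: what you call ``the column sums vanish'' should be ``the sum of the columns is zero'' (equivalently, every row sums to zero, because $g_x(1)=0$) --- the entries of an individual column of $M_{\sigma,g}$ do not sum to zero; and note that your kernel/fixed-space dictionary only uses the forward recurrence, so it does not require the $T_r$ to be invertible, which conveniently covers the degenerate case $z=0$ (i.e.\ $\delta=0$) without a separate argument.
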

\begin{proof}
The $(n-1)$-th order
North-West principal minor  of the $\sigma$-matrix of Dickson associated with
the polynomial 
\[g(t)=-f_{\sigma,\delta}(x)t+\sum_{i=1}^{n-1}a_ix^{\sigma^i}t^{\sigma^i}=
-f_{\sigma,\delta}(x)t+x^\sigma t^\sigma+\delta
x^{\sigma^{n-1}}t^{\sigma^{n-1}},\]
further normalized row by row, is
\begin{equation}\label{e-Mz}
B(z)=\begin{pmatrix}
-(1+z)&1&0&0&\cdots&0&0\\
z^\sigma&-(1+z)^\sigma&1&0&\cdots&0&0\\
0&z^{\sigma^2}&-(1+z)^{\sigma^2}&1&\cdots&0&0\\
\vdots&&&&&&\vdots\\
0&0&0&0&\cdots&-(1+z)^{\sigma^{n-3}}&1\\
0&0&0&0&\cdots&z^{\sigma^{n-2}}&-(1+z)^{\sigma^{n-2}}
\end{pmatrix}.
\end{equation}
By Laplace expansion along the last column and induction on $n$, the determinant of $B(z)$ 
can be computed as $(-1)^{n+1}\sum_{i=0}^{n-1}z^{(\sigma^i-1)/(\sigma-1)}$.
\end{proof}
The following can be useful in understanding the role of $\delta$:
\begin{proposition}\label{rephr}
Let $z\in\Fqn$. Then (\ref{e-LP}) holds if and only if
there exists a $y\in\Fqn^*$ such that $z=y^{\sigma-1}$ and $\Tr_{q^n/q}(y)=0$.
\end{proposition}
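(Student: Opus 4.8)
The plan is to prove the two implications separately, in each case linking the sum in (\ref{e-LP}) to a trace via the substitution $z=y^{\sigma-1}$. Throughout, write $S=\sum_{i=0}^{n-1}z^{(\sigma^i-1)/(\sigma-1)}$ for the left-hand side of (\ref{e-LP}), and record the elementary observation that, since $\gcd(s,n)=1$, the exponents $\sigma^i=q^{si}$ run, modulo $q^n-1$, through all of $q^0,q^1,\ldots,q^{n-1}$ as $i$ ranges over $\{0,1,\ldots,n-1\}$.

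For the implication ``$\Leftarrow$'' I would start from $z=y^{\sigma-1}$ with $y\in\Fqn^*$. Because $(\sigma-1)\cdot\frac{\sigma^i-1}{\sigma-1}=\sigma^i-1$, each summand simplifies to $z^{(\sigma^i-1)/(\sigma-1)}=y^{\sigma^i-1}=y^{\sigma^i}/y$. Summing and using the remark above on the exponents $q^{si}$, the whole sum collapses to $S=\Tr_{q^n/q}(y)/y$. Hence $\Tr_{q^n/q}(y)=0$ yields $S=0$, which is precisely (\ref{e-LP}).

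For the implication ``$\Rightarrow$'' the first and principal step is to show that (\ref{e-LP}) forces $\N_{q^n/q}(z)=1$, so that $z$ genuinely lies in the image of $y\mapsto y^{\sigma-1}$. To this end I would exploit a self-referential identity for $S$: applying the Frobenius $\sigma$ to $S$ and multiplying by $z$ shifts each index by one, since $z\cdot\bigl(z^{(\sigma^i-1)/(\sigma-1)}\bigr)^{\sigma}=z^{(\sigma^{i+1}-1)/(\sigma-1)}$. The two boundary terms are $z^{(\sigma^0-1)/(\sigma-1)}=1$ and $z^{(\sigma^n-1)/(\sigma-1)}$, the latter equal to $\N_{q^n/q}(z)$ because (again using $\gcd(s,n)=1$) the exponent $\frac{\sigma^n-1}{\sigma-1}=\sum_{j=0}^{n-1}q^{sj}$ is congruent to $(q^n-1)/(q-1)$ modulo $q^n-1$. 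This gives $zS^{\sigma}=S-1+\N_{q^n/q}(z)$, so $S=0$ forces $\N_{q^n/q}(z)=1$ at once.

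The remaining steps are then routine. Since $\gcd(\sigma-1,q^n-1)=q^{\gcd(s,n)}-1=q-1$, the map $y\mapsto y^{\sigma-1}$ on $\Fqn^*$ has image of cardinality $(q^n-1)/(q-1)$, which coincides with the kernel of the norm; thus $\N_{q^n/q}(z)=1$ guarantees some $y\in\Fqn^*$ with $z=y^{\sigma-1}$. Feeding this $y$ into the ``$\Leftarrow$'' computation gives $0=S=\Tr_{q^n/q}(y)/y$, whence $\Tr_{q^n/q}(y)=0$, as required. I expect the index-shift identity $zS^{\sigma}=S-1+\N_{q^n/q}(z)$ to be the crux: once it is established, both the norm condition and the reduction of $S$ to a trace follow with essentially no further work.
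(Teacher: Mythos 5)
Your proof is correct and takes essentially the same route as the paper's: your index-shift identity $zS^{\sigma}=S-1+\N_{q^n/q}(z)$ is exactly the paper's step of raising the sum to the $\sigma$, multiplying by $z$ and subtracting, and the remaining ingredients (norm-one elements are precisely the $(\sigma-1)$-th powers, and the substituted sum equals $\Tr_{q^n/q}(y)/y$) coincide as well. The only difference is presentational: you split the statement into two implications, while the paper runs the same argument as a single chain of equivalences.
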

\begin{proof}
Any solution of (\ref{e-LP}) is nonzero.
Raising $\sum_{i=0}^{n-1}z^{(\sigma^i-1)/(\sigma-1)}$ to the $\sigma$, multiplying by $z$ and then 
subtracting to the original equation yields $1-\N_{q^n/q}(z)=0$.
So, $z$ is a solution of (\ref{e-LP}) if and only if
$z=y^{\sigma-1}$ for some $y\in\Fqn^*$, and $\sum_{i=0}^{n-1}y^{\sigma^i-1}=0$.
The latter equation is equivalent to $\Tr_{q^n/q}(y)=0$.
\end{proof}
Propositions \ref{p-LP} and \ref{rephr} together show that, 
if $f_{\sigma,\delta}$ is not scattered, then there is an $x\in\Fqn$ such that 
$\N_{q^n/q}(\delta x^{\sigma^{n-1}-\sigma})=\N_{q^n/q}(\delta)
\N_{q^n/q}(x^{\sigma^{n-1}-\sigma})=1$.
On the other hand, $q-1$ divides $\sigma^{n-1}-\sigma$, hence
 $\N_{q^n/q}( x^{\sigma^{n-1}-\sigma})=1$.
Summarizing, if
$\N_{q^n/q}(\delta)\neq1$, then the Lunardon-Polverino binomial is scattered,
as is known.

\begin{theorem}\label{cnecess}
If $\N_{q^n/q}(\delta)=1$, then
the Lunardon-Polverino binomial $f_{\sigma,\delta}(x)$ is not scattered.
\end{theorem}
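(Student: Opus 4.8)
The plan is to turn ``not scattered'' into the existence of a nontrivial zero of an explicit quadratic form, using Propositions \ref{p-LP} and \ref{rephr}, and then to finish with the Chevalley--Warning theorem. By Proposition \ref{p-LP} it suffices to exhibit one $x\in\Fqn^*$ for which (\ref{e-LP}) holds with $z=\delta x^{\sigma^{n-1}-\sigma}$. The first move is to exploit the hypothesis: since $\N_{q^n/q}(\delta)=1$ and, because $\gcd(s,n)=1$, the maps $x\mapsto x^{\sigma^i}$ ($0\le i\le n-1$) are exactly the elements of $\mathrm{Gal}(\Fqn/\Fq)$, Hilbert's Theorem 90 applied to the generator $x\mapsto x^\sigma$ produces $\mu\in\Fqn^*$ with $\delta=\mu^{\sigma-1}$ (in particular $\delta\ne0$).

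Next I would introduce the exponent $e=\sigma+\sigma^2+\cdots+\sigma^{n-2}$ and, for $x\in\Fqn^*$, set $y=\mu x^{e}$. A one-line telescoping gives $e(\sigma-1)=\sigma^{n-1}-\sigma$, so $y^{\sigma-1}=\mu^{\sigma-1}x^{\sigma^{n-1}-\sigma}=\delta x^{\sigma^{n-1}-\sigma}=z$. Thus $z$ is automatically a $(\sigma-1)$-th power with the explicit root $y$; any other root differs from $y$ by a factor of $\Fq^*$, and since $\Tr_{q^n/q}$ is $\Fq$-linear, Proposition \ref{rephr} shows that (\ref{e-LP}) holds for this $z$ precisely when $\Tr_{q^n/q}(y)=0$. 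So the whole statement reduces to finding a single $x\in\Fqn^*$ with $\Tr_{q^n/q}(\mu x^{e})=0$.

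The key step, and the part I expect to be the real crux, is to recognize this condition as the isotropy of a quadratic form. Writing $\nu=1+\sigma+\cdots+\sigma^{n-1}$ for the norm exponent, one checks $e+1+\sigma^{n-1}=\sum_{i=0}^{n-1}\sigma^i\equiv\nu\pmod{q^n-1}$, whence $x^{e}=\N_{q^n/q}(x)\,x^{-1-\sigma^{n-1}}$ for $x\ne0$. Because $\N_{q^n/q}(x)\in\Fq^*$ and the trace is $\Fq$-linear, the substitution $w=x^{-1}$ gives $\Tr_{q^n/q}(\mu x^{e})=\N_{q^n/q}(x)\,Q(w)$, where $Q(w)=\Tr_{q^n/q}(\mu w^{1+\sigma^{n-1}})$. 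Since $\sigma^{n-1}=q^{s(n-1)}$ is a power of $q$, the map $w\mapsto w^{\sigma^{n-1}}$ is $\Fq$-linear, so $w\mapsto w\cdot w^{\sigma^{n-1}}$ is a Dembowski--Ostrom monomial and $Q$ is a homogeneous quadratic form on the $n$-dimensional $\Fq$-space $\Fqn$. Extracting the factor $\N_{q^n/q}(x)\in\Fq^*$ to convert the high-degree monomial $x^{e}$ into $Q$ is the only genuinely nonobvious point; once it is seen, the rest is routine.

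To conclude, because $n\ge3>2$ the Chevalley--Warning theorem applies to the single homogeneous polynomial $Q$ of degree $2$: the number of its zeros in $\Fqn$ is divisible by $p=\mathrm{char}\,\Fq$, and since $Q(0)=0$ there are at least $p\ge2$ zeros, hence a nonzero one $w$ (equivalently, every quadratic form in at least three variables over a finite field is isotropic). Taking $x=w^{-1}\in\Fqn^*$ and $y=\mu x^{e}$ then yields $\Tr_{q^n/q}(y)=0$ together with $z=\delta x^{\sigma^{n-1}-\sigma}=y^{\sigma-1}$, so (\ref{e-LP}) holds by Proposition \ref{rephr}, and therefore $f_{\sigma,\delta}$ is not scattered by Proposition \ref{p-LP}.
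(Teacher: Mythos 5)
Your proof is correct, and it is structured differently from the paper's. The paper splits into two cases according to the parity of $n$: for odd $n$ it uses $\gcd(s(n-2),n)=1$ to conclude that $x^{\sigma^{n-1}-\sigma}$ ranges over \emph{all} norm-one elements, so that non-scatteredness reduces to the trivial existence of $w\neq0$ with $\Tr_{q^n/q}(w)=0$; for even $n$ it uses $\gcd(\sigma^{n-2}-1,\sigma^n-1)=\sigma^2-1$ to substitute $z=\delta u^{\sigma^2-1}$ and reduces to the isotropy of the quadratic form $\Tr_{q^n/q}(du^{\sigma+1})$. You avoid the parity analysis altogether: after writing $\delta=\mu^{\sigma-1}$ (Hilbert 90) and $z=y^{\sigma-1}$ with $y=\mu x^{e}$, $e=\sigma+\cdots+\sigma^{n-2}$, your norm-extraction identity $x^{e}=\N_{q^n/q}(x)\,x^{-1-\sigma^{n-1}}$ together with the substitution $w=x^{-1}$ (a bijection of $\Fqn^*$, so no image-of-power-map or gcd considerations are needed) turns the condition $\Tr_{q^n/q}(\mu x^{e})=0$ into the isotropy of $Q(w)=\Tr_{q^n/q}\bigl(\mu w^{1+\sigma^{n-1}}\bigr)$, a quadratic form in $n\ge3$ variables over $\Fq$, settled by Chevalley--Warning in any characteristic. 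Note that since the trace is invariant under $a\mapsto a^{\sigma}$, one has $\Tr_{q^n/q}(\mu w^{1+\sigma^{n-1}})=\Tr_{q^n/q}(\mu^{\sigma}w^{\sigma+1})$, so your form is essentially the one appearing in the paper's even case; the novelty of your argument is the uniform reduction to it. All the individual steps check out (in particular, two $(\sigma-1)$-th roots of the same $z$ differ by a factor in $\Fq^*$ because $\gcd(\sigma-1,q^n-1)=q-1$, so the condition $\Tr_{q^n/q}(y)=0$ is independent of the chosen root). What you give up is the paper's observation that for odd $n$ the statement is immediate with no quadratic form at all; what you gain is a single case-free proof.
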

\begin{proof}
\underline{Case odd $n$.}
Since \[x^{\sigma^{n-1}-\sigma}=(x^\sigma)^{\sigma^{n-2}-1}\] and 
$\gcd(s(n-2),n)=1$, the expression $x^{\sigma^{n-1}-\sigma}$ takes all
values in $\F_{q^n}$ whose norm over $\Fq$ is equal to one.
This allows the substitution $\delta x^{\sigma^{n-1}-\sigma}=w^{\sigma-1}$
into (\ref{e-LP}).
So, $f_{\sigma,\delta}(x)$ is not scattered if and only if
$\Tr_{q^n/q}(w)=0$ for some nonzero $w$ and this is trivial.

\underline{Case even $n$.}
Since $\gcd(\sigma^{n-2}-1,\sigma^n-1)=\sigma^2-1$,
the set of all powers of elements in $\Fqn$ with exponent $\sigma^{n-2}-1$
coincides with the set of all powers with exponent $\sigma^{2}-1$.
Hence for any $x\in\F_{q^n}$ there exists $u\in\F_{q^n}$ such that
$x^{\sigma^{n-1}-\sigma}=u^{\sigma^2-1}$, and conversely.
This allows the substitution $z=\delta u^{\sigma^2-1}$ in (\ref{e-LP}),
meaning that if there is $u$ such that 
\begin{equation}\label{lpriform}
\sum_{i=0}^{n-1}\left(\delta u^{\sigma^2-1}\right)^{(\sigma^i-1)/(\sigma-1)}=0,
\end{equation}
then $f_{\sigma,\delta}(x)$ is not scattered.
So, taking $\delta=d^{\sigma-1}$,
(\ref{lpriform}) is equivalent to
$\Tr_{q^n/q}(du^{\sigma+1})=0$, $u\neq0$.
This is a quadratic form in $u$ in a vector space over $\Fq$ of dimension
greater than two which has at least one nontrivial zero.
\end{proof}

The theorem above has been proved in the particular cases
$n=4$ in \cite{CsZ2018}, $s=1$ in \cite{BartoliZhou},
both $n$ and $q$ odd in \cite{LMPT}, and odd $n$ in \cite{ZZ}.

\begin{proposition}\label{pqq}
The polynomial $f(x)=x^\sigma+bx^{\sigma^2}$ is scattered if only if
there is no $x\in\F_{q^n}^*$ such that
\begin{equation}\label{e-pqq}
\sum_{i=0}^{n-1}w^{(\sigma^i-1)/(\sigma-1)}=0,\ \mbox{ where }
w=-(1+b^{-1}x^{\sigma-\sigma^2}).
\end{equation}
\end{proposition}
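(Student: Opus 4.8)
The plan is to mimic the proof of Proposition~\ref{p-LP}, reducing the scattering property of $f(x)=x^\sigma+bx^{\sigma^2}$ to the vanishing of a single determinant of a tridiagonal $\sigma$-matrix of Dickson, and then to identify the relevant quantity as the power sum $\sum_{i=0}^{n-1}w^{(\sigma^i-1)/(\sigma-1)}$ for a suitable substitution. By Proposition~\ref{zero}, $f(x)$ is scattered if and only if, for every $x\in\F_{q^n}^*$, some $(n-1)$-order minor of $M_{\sigma,g}$ is nonsingular, where $g(t)=g_x(t)=-f(x)t+x^\sigma t^\sigma+bx^{\sigma^2}t^{\sigma^2}$. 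As in the previous proof I would pick the $(n-1)$-th order North-West principal minor, which after normalizing each row is again tridiagonal, with the subdiagonal, diagonal, and superdiagonal entries governed by the coefficients $x^\sigma$, $-f(x)$, and $bx^{\sigma^2}$ and their $\sigma$-powers.

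First I would write down the coefficient vector $(a_0,a_1,a_2,\ldots)=(0,x^\sigma,bx^{\sigma^2},0,\ldots,0)$ appearing in $g_x$, so that $M_{\sigma,g}$ has nonzero entries only on three cyclic diagonals. Deleting the last row and last column produces a genuine tridiagonal matrix whose superdiagonal entries are the images of $bx^{\sigma^2}$ and whose subdiagonal entries are the images of $x^\sigma$; the diagonal entries are $-f(x)^{\sigma^j}=-(x^\sigma+bx^{\sigma^2})^{\sigma^j}$. Next I would normalize each row (dividing by an appropriate $\sigma$-power of $x$, exactly as in Proposition~\ref{p-LP}) to clear the $x$-dependence from the off-diagonal entries, aiming to reduce the diagonal entry to the shape $-(1+w^{-1})$ or $-(1+w)$ after factoring. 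The key algebraic manipulation is to divide through so that the relevant ratio becomes $w=-(1+b^{-1}x^{\sigma-\sigma^2})$: note $f(x)/(bx^{\sigma^2})=b^{-1}x^{\sigma-\sigma^2}+1$, so $-f(x)/(bx^{\sigma^2})=-(1+b^{-1}x^{\sigma-\sigma^2})=w$, which is precisely the substitution asserted in \eqref{e-pqq}.

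Once the minor is brought into the tridiagonal form with diagonal entries that are $\sigma$-powers of $(1+w)$ (up to a common scalar) and unit super/subdiagonals, I would compute its determinant by Laplace expansion along the last column together with induction on $n$, exactly as in Proposition~\ref{p-LP}. This recursion yields, up to a nonzero factor and a sign $(-1)^{n+1}$, the power sum $\sum_{i=0}^{n-1}w^{(\sigma^i-1)/(\sigma-1)}$. Consequently the chosen minor is singular precisely when \eqref{e-pqq} holds, and by Proposition~\ref{zero} the polynomial fails to be scattered exactly when such an $x$ exists; this gives the stated equivalence.

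The main obstacle I anticipate is the bookkeeping in the normalization step: because the nonzero diagonal of $g_x$ now sits at positions $1$ and $2$ (rather than $1$ and $n-1$ as in the Lunardon-Polverino case), the placement of the $x^\sigma$ and $bx^{\sigma^2}$ entries in the cyclic $\sigma$-matrix, and the precise $\sigma$-powers acquired under row normalization, must be tracked carefully so that the super- and subdiagonal entries collapse to constants and the diagonal collapses to powers of $1+w$. Getting the substitution $w=-(1+b^{-1}x^{\sigma-\sigma^2})$ to emerge cleanly—rather than some twisted variant—is the delicate point; once the matrix is in the correct normalized tridiagonal shape, the determinant computation is the same induction already carried out for \eqref{e-LP}.
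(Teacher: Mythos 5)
Your overall strategy --- invoke Proposition~\ref{zero}, select one $(n-1)$-order minor of $M_{\sigma,g}$ for $g_x(t)=-f(x)t+x^\sigma t^\sigma+bx^{\sigma^2}t^{\sigma^2}$, normalize rows so that the key ratio $w=-f(x)/(bx^{\sigma^2})=-(1+b^{-1}x^{\sigma-\sigma^2})$ appears, and reduce to the tridiagonal determinant already computed for \eqref{e-LP} --- is exactly the paper's, and your identification of $w$ is correct. But the structural claim on which your computation rests is false, and this is where the proof breaks. In $M_{\sigma,g}$ the coefficient of $t^{\sigma^i}$ occupies, in row $r$, the column $r+i \pmod n$; hence the images of $x^\sigma$ lie on the cyclic \emph{super}diagonal and those of $bx^{\sigma^2}$ on the cyclic diagonal at distance $+2$, and there are no subdiagonal entries at all. (This differs from the Lunardon--Polverino case, where the nonzero coefficients have indices $1$ and $n-1$, i.e.\ distances $+1$ and $-1$, which is exactly why the North-West principal minor was genuinely tridiagonal there.) Consequently, deleting the last row and last column does \emph{not} remove all wrap-around entries: the entry coming from $bx^{\sigma^2}$ in row $n-2$ wraps to column $0$ and survives, so your minor is an upper-triangular band matrix plus a corner term, not a tridiagonal matrix, and ``the same induction already carried out'' for \eqref{e-LP} does not apply to it as claimed. (Your coefficient vector should also read $(-f(x),x^\sigma,bx^{\sigma^2},0,\ldots,0)$ rather than having $0$ in position $0$, though your later text shows you know the diagonal is $-f(x)^{\sigma^j}$.)

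The correct choice --- the one in the paper --- is to delete the last row and the \emph{first} column of the normalized matrix: since the nonzero cyclic diagonals sit at distances $0,+1,+2$, the wrap-around entries occur precisely at positions $(n-2,0)$, $(n-1,0)$, $(n-1,1)$, and removing row $n-1$ and column $0$ kills all three. What remains is literally $B(w)$ of \eqref{e-Mz} (the diagonal $w^{\sigma^r}$ of the normalized matrix becomes the subdiagonal, the superdiagonal $-(1+w)^{\sigma^r}$ becomes the diagonal, and the $1$'s become the superdiagonal), so the determinant $(-1)^{n+1}\sum_{i=0}^{n-1}w^{(\sigma^i-1)/(\sigma-1)}$ from Proposition~\ref{p-LP} can be quoted verbatim and Proposition~\ref{zero} concludes. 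Your choice of minor is not irreparably wrong: by Proposition~\ref{zero}, for each fixed $x$ all $(n-1)$-order minors are singular or nonsingular together, and in fact a two-term Laplace expansion along the last row of your minor (one term from the triangular block, one from the corner entry, the latter involving the $(n-2)$-order analogue of $B(w)$) returns the very same power sum. But that extra computation is precisely the bookkeeping you flagged as delicate and did not carry out, so as written the argument has a gap at its central step.
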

\begin{proof}
The $\sigma$-matrix of Dickson associated with
the polynomial \[g(t)=-f(x)t+x^\sigma t^\sigma+bx^{\sigma^2}t^{\sigma^2},\]
further normalized by dividing the rows by $bx^{\sigma^2}$, $b^{\sigma}x^{\sigma^3}$, $\ldots$ is
\[
A=\begin{pmatrix}
w&-(1+w)&1&\cdots&0&0\\
0&w^\sigma&-(1+w)^\sigma&\cdots&0&0\\
0&0&w^{\sigma^2}&\cdots&0&0\\
\vdots&&&&&\vdots\\
1&0&0&\cdots&w^{\sigma^{n-2}}&-(1+w)^{\sigma^{n-2}}\\
-(1+w)^{\sigma^{n-1}}&1&0&\cdots&0&w^{\sigma^{n-1}}
\end{pmatrix}.
\]

The matrix  obtained by deleting the last row and first 
column is $B(w)$ (cf. (\ref{e-Mz})).
\end{proof}
\begin{corollary}\label{corq}
Assume $b_1,b_2\in\F_{q^n}$ and $\N_{q^n/q}(b_1)=\N_{q^n/q}(b_2)$.
Then the polynomials $f_i(x)=x^\sigma+b_ix^{\sigma^2}$, $i=1,2$, are either both scattered,
or both non-scattered.
\end{corollary}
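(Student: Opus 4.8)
The plan is to read off from Proposition \ref{pqq} exactly which datum attached to $b$ decides whether $f(x)=x^\sigma+bx^{\sigma^2}$ is scattered, and then to show that this datum depends on $b$ only through $\N_{q^n/q}(b)$. First I would dispose of the degenerate case: since the norm vanishes only at $0$, the hypothesis $\N_{q^n/q}(b_1)=\N_{q^n/q}(b_2)=0$ forces $b_1=b_2=0$, in which case both polynomials equal $x^\sigma$, which is scattered because $\gcd(s,n)=1$. So I may assume $b_1,b_2\neq0$ and invoke Proposition \ref{pqq} directly.

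Write $\Phi(w)=\sum_{i=0}^{n-1}w^{(\sigma^i-1)/(\sigma-1)}$. By Proposition \ref{pqq}, $f_i$ is non-scattered if and only if there is $x\in\F_{q^n}^*$ with $\Phi(w)=0$, where $w=-(1+b_i^{-1}x^{\sigma-\sigma^2})$. As $x$ ranges over $\F_{q^n}^*$ the argument $w$ ranges over the set $W_{b_i}=\{-1-b_i^{-1}h:\ h\in H\}$, where $H=\{x^{\sigma-\sigma^2}:x\in\F_{q^n}^*\}$. Hence $f_i$ is non-scattered precisely when $W_{b_i}$ meets the zero set of $\Phi$, and it suffices to prove $W_{b_1}=W_{b_2}$, i.e.\ $b_1^{-1}H=b_2^{-1}H$.

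The key step is to identify $H$ as a subgroup. Since $x\mapsto x^\sigma$ is a bijection of $\F_{q^n}^*$, I rewrite $x^{\sigma-\sigma^2}=(x^\sigma)^{1-\sigma}$ and reparametrize to get $H=\{z^{\sigma-1}:z\in\F_{q^n}^*\}$. Every such $z^{\sigma-1}$ has norm $\N_{q^n/q}(z^\sigma)/\N_{q^n/q}(z)=1$, so $H$ is contained in the group $\cK$ of norm-one elements, which has order $(q^n-1)/(q-1)$. Conversely the kernel of $z\mapsto z^{\sigma-1}$ is $\{z:z^\sigma=z\}=\F_q^*$ (here $\gcd(s,n)=1$ enters), so $|H|=(q^n-1)/(q-1)=|\cK|$, whence $H=\cK$. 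This is just Hilbert 90 made explicit for the automorphism $z\mapsto z^\sigma$.

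Once $H=\cK$ is established, the conclusion is pure coset bookkeeping. The hypothesis $\N_{q^n/q}(b_1)=\N_{q^n/q}(b_2)$ says that $b_1$ and $b_2$ lie in the same coset of $\cK$ in $\F_{q^n}^*$, so $b_2=b_1k$ with $k\in\cK=H$; since $H$ is a group, $b_2^{-1}H=k^{-1}b_1^{-1}H=b_1^{-1}H$, and therefore $W_{b_1}=W_{b_2}$. Consequently the non-scatteredness condition ``$W_{b}\cap\Phi^{-1}(0)\neq\emptyset$'' is literally the same for $b_1$ and $b_2$, so $f_1$ and $f_2$ are either both scattered or both non-scattered. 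The only substantive point — and the one I would flag as the potential obstacle — is the identification $H=\cK$: recognizing that the set of exponents $x^{\sigma-\sigma^2}$ fills out the entire norm-one subgroup, rather than a proper part of it, is what makes the scatteredness depend solely on $\N_{q^n/q}(b)$; the order computation via $\gcd(s,n)=1$ is exactly what guarantees this.
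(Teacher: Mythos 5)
Your proposal is correct and is essentially the paper's own argument in slightly more structural clothing: both reduce the claim via Proposition \ref{pqq} to showing that the maps $x\mapsto -(1+b_i^{-1}x^{\sigma-\sigma^2})$ have the same value set for $i=1,2$, and both rest on the Hilbert-90 fact that the $(\sigma-1)$-st powers are exactly the norm-one elements (using $\gcd(s,n)=1$). The paper establishes the equality of value sets by solving $w_1(x)=w_2(y)$ pointwise, writing $b_1/b_2=c^{\sigma-1}$ and solving $((x/y)^\sigma)^{\sigma-1}=c^{\sigma-1}$, whereas you package the same two ingredients as the coset identity $b_1^{-1}H=b_2^{-1}H$ with $H$ the norm-one subgroup; the mathematical content is identical.
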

\begin{proof}
If the norm of $b_1$ is zero then the statement is trivial, so assume that it is not.
Define $w_i(x)=-(1+b_i^{-1}x^{\sigma-\sigma^2})$ for $i=1,2$, and note that
$w_1(x)=w_2(y)$ is equivalent to
$b_1/b_2=((x/y)^\sigma)^{\sigma-1}$, that is, $((x/y)^\sigma)^{\sigma-1}=c^{\sigma-1}$
for some $c\in\Fqn^*$.
This equation can be always solved in both $x$ and $y$, whence $w_1(x)$ and $w_2(y)$ take the same
set of values.
\end{proof}
\begin{remark}
Corollary \ref{corq} allows to look at only $q-1$ linearized polynomials, given $s$, $n$, and $q$.
This makes a computer search easier.
Computations with GAP\footnote{Code: \href{https://pastebin.com/pgTXX76C}{https://pastebin.com/pgTXX76C}.}
 show that there are no scattered linearized polynomials of the form
$l_b(x)=x^q+bx^{q^2}$, $b\neq0$, for any $q<223$ if $n=5$.
In \cite{MZ2019} it is proved that for $n=5$ and $q\ge223$ the linearized polynomial
$l_b(x)$ is not scattered for any $b\neq0$.
The next proposition summarizes this.
\end{remark}
\begin{proposition}\label{223}
If $n=5$ and $b\in\F_{q^5}^*$, then the $q$-polynomial $l_b(x)=x^q+bx^{q^2}\in\F_{q^5}[x]$ is 
non-scattered.
\end{proposition}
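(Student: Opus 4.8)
The plan is to combine the arithmetic criterion of Proposition \ref{pqq} with a split on the size of $q$. Specializing Proposition \ref{pqq} to $n=5$ and $\sigma=q$, the polynomial $l_b(x)=x^q+bx^{q^2}$ fails to be scattered exactly when some $x\in\F_{q^5}^*$ satisfies
\[
1+w+w^{q+1}+w^{q^2+q+1}+w^{q^3+q^2+q+1}=0,\qquad w=-\bigl(1+b^{-1}x^{q-q^2}\bigr),
\]
which is the case $n=5$ of (\ref{e-pqq}). By Corollary \ref{corq} the solvability of this equation depends only on $\N_{q^5/q}(b)$, so it suffices to produce a solution for one representative $b$ in each of the $q-1$ nonzero norm classes.

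For $q<223$ this is a finite verification: for each such $q$ and each norm representative, one lets $x$ (equivalently $w$) run over $\F_{q^5}$ and tests the displayed identity. This is exactly the GAP search recorded in the remark preceding the statement, which returns no scattered $l_b$; hence every $l_b$ with $b\ne0$ is non-scattered for $q<223$.

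For $q\ge223$ the existence of a solution $x$ is supplied by \cite{MZ2019}. The mechanism is the standard passage to an algebraic curve: introducing the Frobenius conjugates $x^q,x^{q^2},\ldots$ as new variables converts the single high-degree equation above into a system of bounded-degree equations defining a curve, on which a genuine solution $x\in\F_{q^5}^*$ corresponds to a rational point lying off a controlled locus of degenerate points (points at infinity, intersections with the spurious components, and points violating the $\Fq$-independence condition behind scatteredness). Once a suitable absolutely irreducible component defined over the base field is exhibited, the Hasse--Weil estimate provides more rational points than the number to be discarded as soon as $q\ge223$, so a usable point, and with it a non-scattered certificate, survives.

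Chaining the two ranges together covers all prime powers $q$ and completes the proof. The substantive step, and the entire content of \cite{MZ2019}, is the large-$q$ analysis: proving the absolute irreducibility of the relevant component over $\F_q$ and bounding the excluded points sharply enough to bring the Hasse--Weil threshold down to $223$. By contrast, the reduction via Proposition \ref{pqq} and Corollary \ref{corq} and the small-$q$ computation are routine.
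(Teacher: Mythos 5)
Your proposal is correct and follows essentially the same route as the paper: the paper offers no separate proof of Proposition \ref{223}, but rather states that it ``summarizes'' the preceding remark, which consists precisely of the reduction to $q-1$ norm classes via Corollary \ref{corq}, the GAP verification for $q<223$, and the citation of \cite{MZ2019} for $q\ge223$. Your added sketch of the large-$q$ mechanism (curve, absolute irreducibility, Hasse--Weil) is a plausible gloss on what \cite{MZ2019} does, consistent with Proposition \ref{geomalg} and Remark \ref{maria}, but it is not part of the paper's own argument.
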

\begin{remark}
For $n=4$ there are scattered polynomials of type $l_b(x)$, $b\neq0$. By the results in 
\cite{CsZ20162,CsZ2018}, all the related linear sets are of Lunardon-Polverino type, up to 
collineations.
\end{remark}
\begin{proposition}\label{geomalg}
Let $b\in\Fqn^*$.
The polynomial $x^\sigma+bx^{\sigma^2}\in\Fqn[x]$ is not scattered if and only if
the algebraic curve $b^{-1}X^{q-1}+Y^{\sigma-1}+1=0$ in $\AG(2,q^n)$ has a point
$(x_0,y_0)$ with coordinates in $\Fqn^*$, such that $\Tr_{q^n/q}(y_0)=0$.
\end{proposition}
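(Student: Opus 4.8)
The plan is to obtain the statement as a straightforward chain of equivalences, combining Propositions \ref{pqq} and \ref{rephr} and then translating the algebraic condition into the geometry of the stated curve by a single gcd computation.

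First I would invoke Proposition \ref{pqq}: the polynomial $f(x)=x^\sigma+bx^{\sigma^2}$ fails to be scattered exactly when there is some $x\in\Fqn^*$ with $\sum_{i=0}^{n-1}w^{(\sigma^i-1)/(\sigma-1)}=0$, where $w=-(1+b^{-1}x^{\sigma-\sigma^2})$. This is precisely equation (\ref{e-LP}) with the variable $z$ specialized to $w$, so Proposition \ref{rephr} applies verbatim (with $z=w$) and says that the sum vanishes if and only if there is a $y\in\Fqn^*$ with $w=y^{\sigma-1}$ and $\Tr_{q^n/q}(y)=0$. Substituting the value of $w$, non-scatteredness of $f$ becomes equivalent to the existence of $x,y\in\Fqn^*$ satisfying
\[
b^{-1}x^{\sigma-\sigma^2}+y^{\sigma-1}+1=0\quad\text{and}\quad\Tr_{q^n/q}(y)=0.
\]

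Next I would reconcile the exponent $\sigma-\sigma^2$ occurring here with the exponent $q-1$ appearing in the curve. Writing $\sigma=q^s$ and using $\gcd(\sigma,q^n-1)=1$, I compute
\[
\gcd(\sigma-\sigma^2,\,q^n-1)=\gcd(\sigma-1,\,q^n-1)=q^{\gcd(s,n)}-1=q-1,
\]
the final equality being the standing hypothesis $\gcd(s,n)=1$. Since $\Fqn^*$ is cyclic, the image of the power map $x\mapsto x^{\sigma-\sigma^2}$ equals the subgroup of $(q-1)$-th powers, i.e.\ the image of $X\mapsto X^{q-1}$; hence $\{x^{\sigma-\sigma^2}\colon x\in\Fqn^*\}=\{X^{q-1}\colon X\in\Fqn^*\}$, and I may replace $x^{\sigma-\sigma^2}$ by $x_0^{q-1}$ for a suitable $x_0\in\Fqn^*$, and conversely.

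Making this replacement, the displayed condition becomes: there exist $x_0,y\in\Fqn^*$ with $b^{-1}x_0^{q-1}+y^{\sigma-1}+1=0$ and $\Tr_{q^n/q}(y)=0$, which is exactly the requirement that the curve $b^{-1}X^{q-1}+Y^{\sigma-1}+1=0$ carry an affine point $(x_0,y)$ with both coordinates in $\Fqn^*$ and $\Tr_{q^n/q}(y)=0$. This is the assertion. The argument is a chain of biconditionals, so I do not expect a serious obstacle; the one step deserving care is the gcd computation, since it is what legitimizes passing from the exponent $\sigma-\sigma^2$ forced by Proposition \ref{pqq} to the cleaner exponent $q-1$ recorded in the statement. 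I would also note in passing that nonvanishing of both coordinates is automatic: $y\neq0$ from $y\in\Fqn^*$ in Proposition \ref{rephr}, and $x_0\neq0$ since $x_0^{q-1}=x^{\sigma-\sigma^2}\neq0$ for the nonzero $x$ supplied by Proposition \ref{pqq}.
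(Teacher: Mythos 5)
Your proof is correct and follows essentially the same route as the paper: combine Propositions \ref{pqq} and \ref{rephr}, then pass from the exponent $\sigma-\sigma^2$ to $q-1$ using the fact that the two power maps have the same image in $\Fqn^*$. Your explicit gcd computation just fills in a step the paper leaves implicit (the paper replaces $x^{\sigma-\sigma^2}$, written there as $x^{q-q^2}$, by $x^{q-1}$ without further comment).
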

\begin{proof}
By Proposition \ref{rephr},
the first equation in (\ref{e-pqq}) is equivalent to the existence of $y\in\Fqn^*$
such that $w=y^{\sigma-1}$, $\Tr_{q^n/q}(y)=0$.
The second equation $y^{\sigma-1}+1+b^{-1}x^{q-q^2}=0$ has solutions with
$x\neq0$ if and only if $b^{-1}x^{q-1}+y^{\sigma-1}+1=0$ does.
\end{proof}
\begin{remark}\label{maria}
Very recently, M. Montanucci \cite{pc} proved that if $n>5$,
then for any $q$ the algebraic curve $b^{-1}X^{q-1}+Y^{q-1}+1=0$ has a point with 
the properties above.
Together with Propositions \ref{223} and \ref{geomalg} this implies that for $n\ge5$ 
no $q$-polynomial of type $l_b(x)=x^q+bx^{q^2}$, $b\neq0$, is scattered.
\end{remark}

\bigskip

\noindent
Corrado Zanella\\
Dipartimento di Tecnica e Gestione dei Sistemi Industriali\\
Universit\`a degli Studi di Padova\\
Stradella S. Nicola, 3\\
36100 Vicenza VI\\
Italy\\
\emph{corrado.zanella@unipd.it}


\begin{thebibliography}{100}

\bibitem{BGMP2018}
{\sc D. Bartoli, M. Giulietti, G. Marino and O. Polverino:}
Maximum scattered linear sets and complete caps
in Galois spaces, Combinatorica 38 (2018), 255--278.

\bibitem{BartoliZhou}
{\sc D. Bartoli and Y. Zhou:}
Exceptional scattered polynomials,
\emph{J. Algebra} {\bf 509} (2018), 507--534.

\bibitem{BL2000}
{\sc A. Blokhuis and M. Lavrauw:}
Scattered spaces with respect to a spread in $\mathrm{PG}(n,q)$,
\emph{Geom. Dedicata} {\bf 81} (2000), 231--243.



\bibitem{Cs2018}
{\sc B. Csajb\'ok:} 
Scalar $q$-subresultants and Dickson matrices, manuscript, 2018.
 \href{https://arxiv.org/abs/arXiv:1909.06409}{arXiv:1909.06409}



\bibitem{CMPZ}
{\sc B. Csajb\'ok, G. Marino, O. Polverino and C. Zanella:}
A new family of MRD-codes,
\emph{Linear Algebra Appl.} {\bf 548} (2018), 203--220.



\bibitem{CsMZ2018}
{\sc B. Csajb\'ok, G. Marino and F. Zullo:}
New maximum scattered linear sets of the projective line,
\emph{Finite Fields Appl.} {\bf 54} (2018), 133--150.

\bibitem{CsMPZ2019}
{\sc B. Csajb\'ok, G. Marino, O. Polverino and F. Zullo:}
A characterization of linearized polynomials with maximum kernel.
\emph{Finite Fields Appl.} {\bf 56} (2019), 109-130.

\bibitem{CSZ2015}
{\sc B. Csajb\'ok and C. Zanella:}
On the equivalence of linear sets,
\emph{Des. Codes Cryptogr.} {\bf 81} (2016), 269--281.

\bibitem{CsZ20162}
{\sc B. Csajb\'ok and C. Zanella:}
On scattered linear sets of pseudoregulus type in $\PG(1,q^t)$,
{\it Finite Fields Appl.} {\bf 41} (2016), 34--54.

\bibitem{CsZ2018}
{\sc B. Csajb\'ok and C. Zanella:}
Maximum scattered $\F_q$-linear sets of $\PG(1,q^4)$,
\emph{Discrete Math.} {\bf 341} (2018), 74–-80.


\bibitem{LMPT}
{\sc M. Lavrauw, G. Marino, O. Polverino, and R. Trombetti:}
Solution to an isotopism question concerning rank 2 semifields,
\emph{J. Comb.\ Des.} {\bf23} (2015), 60--77.




\bibitem{LP2001}
{\sc G. Lunardon and O. Polverino:}
Blocking sets and derivable partial spreads,
\emph{J. Algebraic Combin.} {\bf 14} (2001), 49--56.


\bibitem{LTZ}
{\sc G. Lunardon, R. Trombetti and Y. Zhou:}
Generalized twisted Gabidulin codes,
\emph{J. Combin. Theory Ser. A} {\bf 159} (2018), 79--106.

\bibitem{MMZ}
{\sc G. Marino, M. Montanucci and F. Zullo:}
MRD-codes arising from the trinomial $x^q+x^{q^3}+cx^{q^5}\in\F_{q^6}[x]$,
manuscript, 2019. \href{https://arxiv.org/abs/1907.08122}{arXiv:1907.08122}

\bibitem{MGS2019}
{\sc G. McGuire and J. Sheekey:}
A characterization of the number of roots
of linearized and projective polynomials in the field
of coefficients,
\emph{Finite Fields Appl.} {\bf 57} (2019), 68--91.


\bibitem{pc}
{\sc M. Montanucci:}
Private communication, 2019.

\bibitem{MZ2019}
{\sc M. Montanucci and C. Zanella:}
A class of linear sets in $\PG(1, q^5 )$, manuscript, 2019.
\href{https://arxiv.org/abs/1905.10772}{arXiv:1905.10772}.


\bibitem{Po2010}
{\sc O. Polverino:}
Linear sets in finite projective spaces, Discrete Math.\ {\bf 310} (2010), 3096--3107.

\bibitem{PZ2019}
{\sc O. Polverino and F. Zullo:}
On the number of roots of some linearized polynomials, manuscript, 2019.
\href{https://arxiv.org/abs/1909.00802}{arXiv:1909.00802}

\bibitem{Sh}
{\sc J. Sheekey:}
A new family of linear maximum rank distance codes,
{\it Adv. Math. Commun.} {\bf 10(3)} (2016), 475--488.


\bibitem{WL2003}
{\sc B. Wu and Z. Liu:} 
Linearized polynomials over finite fields revisited,
Finite Fields Appl. {\bf22} (2013), 79--100.

\bibitem{ZZ}
{\sc C. Zanella and F. Zullo:}
Vertex properties of maximum scattered linear sets of $\PG(1,q^n)$,
manuscript, 2019.
\href{https://arxiv.org/abs/1906.05611}{arXiv:1906.05611}.

\end{thebibliography}
\end{document}